\newtheorem{lemma}{Lemma}[section]
\newtheorem{corollary}[lemma]{Corollary}
\newtheorem{theorem}[lemma]{Theorem}
\theoremstyle{definition} %Set Following proclamations in normal text.
\newcommand\reals{{\mathbb R}}
\newcommand{\dg}{\sp{\text{\rm o}}}
\newcommand{\spec}{\operatorname{spec}}
\begin{document}

\title{Kadison's antilattice theorem for a synaptic algebra}

\author{David J. Foulis{\footnote{Emeritus Professor, Department of
Mathematics and Statistics, University of Massachusetts, Amherst,
MA; Postal Address: 1 Sutton Court, Amherst, MA 01002, USA;
foulis@math.umass.edu.}}\hspace{.05 in},  Sylvia
Pulmannov\'{a}{\footnote{ Mathematical Institute, Slovak Academy of
Sciences, \v Stef\'anikova 49, SK-814 73 Bratislava, Slovakia;
pulmann@mat.savba.sk. The second  author was supported by grant
VEGA No.2/0069/16.}}}

\date{}

\maketitle

\begin{abstract}
We prove that if $A$ is a synaptic algebra and the orthomodular
lattice $P$ of projections in $A$ is complete, then $A$ is a
factor iff $A$ is an antilattice. We also generalize several
other results of R. Kadison pertaining to infima and suprema
in operator algebras.
\end{abstract}

\section{Introduction} \label{sc:Intro}

A synaptic algebra \cite{FSyn, FPproj, FPtype, FPsymSA, FPcom, FPBanach,
FJP2proj, FJPpande, vectlat, FJPstat, FJPMSR, FJPLS, Pid} is a generalization of
the self-adjoint part of several structures based on operator algebras.
For instance, although a synaptic algebra $A$ need not be norm complete
(i.e., Banach), $A$ is isomorphic to the self-adjoint part of a Rickart
C$\sp{\ast}$-algebra if and only if it is Banach \cite[Theorem 5.3]
{FPBanach}. Also, $A$ is isomorphic to the self-adjoint part of an
AW$\sp{\ast}$-algebra iff it is Banach and its projection lattice is
complete \cite[Theorem 8.5]{FPBanach}. Numerous additional examples of
synaptic algebras can be found in the references cited above.

In \cite{Kad}, Richard Kadison calls the self-adjoint part $\mathfrak S$
of an operator algebra an \emph{antilattice} if and only if, whenever two
elements of $\mathfrak S$ have an infimum in $\mathfrak S$, then the
elements are comparable (i.e., one is less than or equal to the other).
As Kadison remarks \cite[ p. 505]{Kad}, ``A moments thought shows that
this is as strongly nonlattice as a partially ordered vector space can
be." He shows that, in important cases, the condition that $\mathfrak S$
is an antilattice is equivalent to the condition that the operator algebra
in question is a factor (i.e., its center consists only of scalars). Our
main theorem in this paper (Theorem \ref{th:MainTh}) is a version of Kadison's
result for a synaptic algebra in which the projections form a complete lattice.

\section{Some properties of a synaptic algebra}

Axioms for a synaptic algebra can be found in \cite[\S 1]{FSyn} and
will not be repeated here. Rather, we shall briefly sketch some of the
important features of a synaptic algebra that we shall need below.
Readers who are informed about operator algebras will be familiar with
many of these features---details can be found in the references given
in Section \ref{sc:Intro}, especially in \cite{FSyn}. We use `iff' as
an abbreviation for `if and only if,' the notation $:=$ means `equals
by definition,' and $\reals$ is the ordered field of real numbers.

\emph{In what follows, we assume that $A$ is a synaptic algebra} \cite
[Definition 1.1]{FSyn}. Thus, associated with $A$ is a real or complex
associative unital algebra $R$ with unit $1$ called the \emph{enveloping
algebra} of $A$ such that (1) $1\in A$, (2) $A$ is a real linear subspace
of $R$, (3) the linear space $A$ is a partially ordered order-unit normed
space with order unit $1$ and positive cone $A\sp{+}:=\{a\in A:0\leq a\}$,
and (4) the order-unit norm of $a\in A$ is denoted and defined by $\|a\|
:=\inf\{0<\lambda\in\reals:-\lambda1\leq a\leq\lambda1\}$ \cite[pp. 67--69]
{Alf}. In important examples, the enveloping algebra $R$ is a unital
(concrete or abstract) operator algebra with an adjoint mapping $a\mapsto
a\sp{\ast}$, $A$ is the self-adjoint part of $R$, and $A\sp{+}
=\{rr\sp{\ast}:r\in R\}$. We shall assume that $A$ is \emph{nontrivial},
i.e., $A\not=\{0\}$. Then $1\not=0$, which enables us, as usual, to
identify each scalar $\lambda\in\reals$ with the element $\lambda1\in A$.

Let $M\subseteq A$. Then $M$ is understood to be partially ordered
under the restriction of the partial order $\leq$ on $A$. If $m,n
\in M$, then the infimum (greatest lower bound) and the supremum
(least upper bound) of $m$ and $n$ in $M$---if they exist---are
written as $m\wedge\sb{M}n$ and $m\vee\sb{M}n$, respectively. An
\emph{involution} on $M$ is a mapping $m\mapsto m\sp{\prime}$ that
is order reversing (i.e., $m\leq n\Rightarrow n\sp{\prime}\leq m
\sp{\prime}$) and of order two (i.e., $(m\sp{\prime})\sp{\prime}=m$).
Such an involution provides a ``duality" between existing infima and
suprema in $M$ according to $m\wedge\sb{M}n=(m\sp{\prime}\vee\sb{M}n
\sp{\prime})\sp{\prime}$ and $m\vee\sb{M}n=(m\sp{\prime}\wedge\sb{M}n
\sp{\prime})\sp{\prime}$. Note that the mapping $a\mapsto -a$ is an
involution on $A$ itself.

Let $a,b\in A$. Then the product $ab$ is understood to be calculated
in $R$ and may or may not belong to $A$; however, it is assumed that
$0\leq a\sp{2}\in A$. Consequently, the Jordan product
\[
a\odot b:=\frac12(ab+ba)=\frac12((a+b)\sp{2}-a\sp{2}-b\sp{2})\in A,
\]
so $A$ is a real unital special Jordan algebra under $\odot$ \cite{McC}.
If $a$ commutes with $b$, i.e., $ab=ba$ in $R$, we write $aCb$. Evidently,
if $aCb$, then $ab=a\odot b\in A$. It can be shown that $ab=0$ iff $ba=0$.
Also, if $a,b\in A\sp{+}$ and $aCb$, then $ab\in A\sp{+}$. As a consequence,
if $a,b,c\in A$, $a\leq b$, $0\leq c$, $cCa$, and $cCb$, then $ca\leq cb$;
indeed, by the hypotheses, $0\leq c, b-a$ and $cC(b-a)$, whence $0\leq
cb-ca$, i.e., $ca\leq cb$.

Suppose that $a,b\in A$ and put $c:=2(a\odot b)$. Then $aba=a\odot c-
a\sp{2}\odot b\in A$ and the mapping $b\mapsto aba$ is called the
\emph{quadratic mapping} on $A$ determined by $a$. It can be shown
that the quadratic mapping $b\mapsto aba$ is both linear and order
preserving on $A$.

An element $e\in A$ such that $0\leq e\leq 1$ is called an \emph{effect},
and it can be shown that $e\in A$ is an effect iff $e\sp{2}\leq e$.
The subset $E:=\{e\in A:0\leq e\leq 1\}$ of $A$ forms a convex effect
algebra \cite{GPBB} under the partially defined binary operation obtained
by restriction to $E$ of the addition operation $+$ on $A$. The mapping
$e\mapsto e\sp{\perp}:=1-e$,  called the \emph{orthosupplementation} on
$E$, is an involution on $E$.

An idempotent element $p=p\sp{2}$ in $A$ is called a \emph{projection},
and the set $P:=\{p\in A:p=p\sp{2}\}$ of all projections in $A$ is a
subset of $E$; in fact, $P$ is precisely the set of all extreme points
of the convex set $E$. Under the restriction to $P$ of the partial order
on $A$, $P$ forms an orthomodular lattice (OML) \cite[\S 5]{FSyn}, \cite
{Beran, Kalm}, and the orthocomplementation $p\mapsto p\sp{\perp}:=1-p$
on the OML $P$ is the restriction to $P$ of the orthosupplementation on
$E$. The orthocomplementation mapping is an involution on $P$, whence we
have an infimum-supremum duality on the lattice $P$. If $e\in E$ and $p
\in P$, it can be shown that $e\leq p\Leftrightarrow e=ep\Leftrightarrow
e=pe$ and $p\leq e\Leftrightarrow p=pe\Leftrightarrow p=ep$.

Let $p,q\in P$. Then $p\leq q$ iff $p=pq$ iff $p=qp$. We shall write the
infimum and the supremum of $p$ and $q$ in the lattice $P$ as $p\wedge q$
and $p\vee q$, respectively, (without subscripts on $\wedge$ and $\vee$).
If $pCq$, then $p\wedge q=pq=qp$ and $p\vee q=p+q-pq$. The projections $p$
and $q$ are said to be \emph{orthogonal}, in symbols $p\perp q$, iff
$p\leq q\sp{\perp}$, or equivalently iff $q\leq p\sp{\perp}$. Note that
$p\perp q$ iff $pq=0$ iff $qp=0$ iff $p+q=p\vee q$. In particular, for the
orthocomplement $p\sp{\perp}$ of $p$ in $P$, we have $p\perp p\sp{\perp}$,
$pCp\sp{\perp}$, $p\wedge p\sp{\perp}=pp\sp{\perp}=0$ and $p\vee p\sp{\perp}
=p+p\sp{\perp}=1$.

If $M\subseteq A$, then $C(M):=\{a\in A:aCm\text{\ for all\ }m\in M\}$
is called the \emph{commutant} of $M$ and $CC(M):=C(C(M))$ is called
the \emph{bicommutant} of $M$. Evidently, $C(M)$ is a linear subspace
(or a vector subspace) of $A$. The subset $M$ of $A$ is said to be
\emph{commutative} iff $mCn$ for all $m,n\in M$, i.e., iff $M\subseteq
C(M)$. If $M$ is commutative, then so is $CC(M)$. If $a\in A$, then
$C(a):=C(\{a\})$ and $CC(a):=CC(\{a\})$.

If $a\in A\sp{+}$, then there exists a unique $a\sp{1/2}\in A\sp{+}$,
called the \emph{square root} of $a$, such that $(a\sp{1/2})\sp{2}=a$;
moreover, $a\sp{1/2}\in CC(a)$. The \emph{absolute value} of $a\in A$
is denoted and defined by $|a|:=(a\sp{2})\sp{1/2}$ and the \emph{positive}
and \emph{negative} parts of $a$ are denoted and defined by $a\sp{+}:=
\frac12(|a|+a)$ and $a\sp{-}:=\frac12(|a|-a)$, respectively. Then $0\leq
a\sp{+}, a\sp{-}\in CC(a)$, $a=a\sp{+}-a\sp{-}$, $|a|=a\sp{+}+a\sp{-}$,
and $a\sp{+}a\sp{-}=0$. It turns out that an element $a\in A$ has an
\emph{inverse} $a\sp{-1}$ in $A$ such that $aa\sp{-1}=a\sp{-1}a=1$ iff
there exists $0<\epsilon \in\reals$ such that $\epsilon\leq|a|$.

If $a\in A$, there exists a unique projection $a\dg\in P$, called the
\emph{carrier} of $a$, such that, for all $b\in A$, $ab=0\Leftrightarrow
a\dg b=0$. (Some authors would refer to $a\dg$ as the \emph{support} of
$a$.) It turns out that $a\dg\in CC(a)$, $a\dg$ is the smallest
projection $p\in P$ such that $a=ap$, and if $e\in E$, then $e\dg$ is
the smallest projection $p\in P$ such that $e\leq p$. See \cite[Theorem
2.10]{FSyn} for additional properties of the carrier.

\begin{lemma} \label{le:sup,inf,p,q}
If $p,q\in P$, then $p\wedge q=p\wedge\sb{E}q$ and $p\vee q=p\vee\sb{E}q$.
\end{lemma}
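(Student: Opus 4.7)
The plan is to establish the infimum identity directly, and then obtain the supremum identity by invoking the involution duality that is available both on $P$ and on $E$.

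For the infimum, I would argue as follows. Since $P\subseteq E$ and the partial order on $E$ is inherited from $A$, the projection $p\wedge q$ certainly lies in $E$ and is a lower bound of $\{p,q\}$ there. The real content is showing it is the \emph{greatest} lower bound in $E$. So suppose $e\in E$ with $e\leq p$ and $e\leq q$. Here I would use the carrier: the excerpt records that for $e\in E$, $e\sp\dg$ is the smallest projection $r\in P$ with $e\leq r$. Hence $e\leq p$ forces $e\sp\dg\leq p$, and similarly $e\sp\dg\leq q$. Since $e\sp\dg\in P$, this yields $e\sp\dg\leq p\wedge q$ in $P$, and combining with $e\leq e\sp\dg$ gives $e\leq p\wedge q$ in $A$, hence in $E$. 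This proves $p\wedge q=p\wedge\sb{E}q$.

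For the supremum, I would exploit the involution $e\mapsto e\sp\perp=1-e$ on $E$, which restricts to the orthocomplementation on $P$ (both noted in the excerpt). The infimum-supremum duality supplied by an involution then gives
\[
p\vee\sb{E}q=(p\sp\perp\wedge\sb{E}q\sp\perp)\sp\perp=(p\sp\perp\wedge q\sp\perp)\sp\perp=p\vee q,
\]
where the middle equality is the infimum result just proved applied to the projections $p\sp\perp,q\sp\perp$, and the outer equalities are the involution dualities on $E$ and on $P$, respectively.

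I do not anticipate a serious obstacle; the only subtle point is recognizing that the minimality property of the carrier $e\sp\dg$ is exactly the tool needed to ``round up'' an arbitrary effect lower bound to a projection lower bound, after which everything reduces to the lattice structure of $P$. The supremum half is then essentially free from the involution formalism.
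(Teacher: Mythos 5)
Your proposal is correct and matches the paper's own proof essentially step for step: the paper likewise rounds up an effect lower bound $e$ to its carrier $e\dg\leq p,q$, concludes $e\leq e\dg\leq p\wedge q$, and then obtains the supremum statement ``by duality,'' which is exactly the orthosupplementation involution argument you spell out. No gaps.
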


\begin{proof}
We have $p,q,p\wedge q\in P\subseteq E$ and $p\wedge q\leq p,q$. Suppose
$e\in E$ with $e\leq p,q$. Then $e\dg\leq p,q$, whence $e\leq e\dg\leq
p\wedge q$, and therefore $p\wedge q=p\wedge\sb{E}q$. By duality, $p\vee
q=p\vee\sb{E}q$.
\end{proof}

In view of Lemma \ref{le:sup,inf,p,q}, no confusion will result if we
use the same notation $e\wedge f$ and $e\vee f$ (without subscripts)
for existing infima and suprema of effects as we do for infima and
suprema of projections. (Actually, the question of just when two effects
have an infimum in $E$ is important, but not easy to resolve \cite{GGJinf}.)

There is a very satisfactory spectral theory for $A$ based on the following
notions \cite[\S 8]{FSyn}: Let $a\in A$. The \emph{spectral resolution} of
$a$ is the one-parameter family of projections $\{p\sb{\lambda}:\lambda
\in\reals\}$ given by $p\sb{\lambda}:=((a-\lambda)\sp{+})\dg)\sp{\perp}
\in CC(a)$ for all $\lambda\in\reals$. (Recall that $\lambda$ is identified
with $\lambda1\in A$.) The \emph{spectral lower and upper bounds} for $a$
are defined by $L:=\sup\{\lambda\in\reals:\lambda\leq a\}$ and $U:=\inf
\{\lambda\in\reals:a\leq\lambda\}$, respectively. By \cite[Theorem 3.1]
{SROUS}, we have $-\infty<L\leq a\leq U<\infty$ and $\|a\|=\max\{|L|,|U|\}$.

A real number $\rho$ belongs to the \emph{resolvent
set} of $a$ iff there is an open interval $I$ in $\reals$ such that $\rho
\in I$ and $p\sb{\lambda}=p\sb{\rho}$ for all $\lambda\in I$. The \emph
{spectrum} of $a$, in symbols $\spec(a)$, which is defined to be the
complement in $\reals$ of the resolvent set of $a$, is a nonempty closed
and bounded subset of $\reals$ with all of the expected properties.

\begin{theorem} \label{th:qsublambda}
Let $a\in A$ with $0<a$ and let $\{p\sb{\lambda}:\lambda\in\reals\}$
be the spectral resolution of $a$. For $\lambda\in\reals$, define
$q\sb{\lambda}:=1-p\sb{\lambda}=((a-\lambda)\sp{+})\dg$. Then{\rm: (i)}
$q\sb{\lambda}\in CC(a)\subseteq C(a)$. {\rm (ii)} If $\lambda<0$, then
$q\sb{\lambda}=1$. {\rm(iii)} $0<q\sb{0}=a\dg$. {\rm(iv)} If $0<\lambda
<\|a\|$, then $0<q\sb{\lambda}$ and $\lambda q\sb{\lambda}\leq q
\sb{\lambda}a=aq\sb{\lambda}\leq a$. {\rm(v)} If $\lambda\geq\|a\|$,
then $q\sb{\lambda}=0$.
\end{theorem}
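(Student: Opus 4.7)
The plan is to treat (i), (ii), (iii), (v) quickly using elementary properties of the carrier and the positive part, and to concentrate effort on (iv).

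Part (i) is immediate: $p\sb{\lambda}\in CC(a)$ by definition and $1\in CC(a)$, so linearity of $CC(a)$ gives $q\sb{\lambda}=1-p\sb{\lambda}\in CC(a)$; the inclusion $CC(a)\subseteq C(a)$ follows from $a\in C(a)$. For (ii), when $\lambda<0$ the element $a-\lambda\geq-\lambda>0$, so $(a-\lambda)\sp{+}=a-\lambda$ and the invertibility criterion applied with $\epsilon=-\lambda\leq|a-\lambda|$ makes $a-\lambda$ invertible, which forces its carrier to be $1$ (since $b(1-p)=0$ combined with invertibility of $b$ gives $p=1$). For (iii), $a\sp{+}=a$ (as $a\geq 0$) gives $q\sb{0}=a\dg$, and $a\not=0$ prevents $a\dg=0$ via the defining carrier property. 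For (v), $-\|a\|\leq a\leq\|a\|\leq\lambda$ yields $(a-\lambda)\sp{+}=0$, hence $q\sb{\lambda}=0\dg=0$.

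The core of the argument is (iv). To see $0<q\sb{\lambda}$, I would first observe that for $0<a$ the order-unit norm agrees with the upper spectral bound, $\|a\|=U=\inf\{\mu\in\reals:a\leq\mu\}$, because $0\leq a$ makes the upper constraint in the norm definition binding; thus $\lambda<\|a\|$ gives $a\not\leq\lambda$, so $(a-\lambda)\sp{+}\not=0$ and its carrier $q\sb{\lambda}$ is nonzero. The commutation $q\sb{\lambda}a=aq\sb{\lambda}$ is part (i). The bound $aq\sb{\lambda}\leq a$ follows because $p\sb{\lambda}=1-q\sb{\lambda}$ is a projection commuting with $a\geq 0$, so $ap\sb{\lambda}\geq 0$. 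For $\lambda q\sb{\lambda}\leq q\sb{\lambda}a$, decompose $a-\lambda=(a-\lambda)\sp{+}-(a-\lambda)\sp{-}$ with $(a-\lambda)\sp{+}(a-\lambda)\sp{-}=0$. The identity $b=b\cdot b\dg$ applied to $b=(a-\lambda)\sp{+}$ gives $(a-\lambda)\sp{+}q\sb{\lambda}=(a-\lambda)\sp{+}$, while $(a-\lambda)\sp{+}(a-\lambda)\sp{-}=0$ combined with the defining carrier property $bc=0\Leftrightarrow b\dg c=0$ forces $q\sb{\lambda}(a-\lambda)\sp{-}=0$. Multiplying the decomposition by $q\sb{\lambda}$ (using $q\sb{\lambda}\in CC((a-\lambda)\sp{+})$ to commute the first term) then yields $q\sb{\lambda}(a-\lambda)=(a-\lambda)\sp{+}\geq 0$, i.e., $\lambda q\sb{\lambda}\leq q\sb{\lambda}a$.

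The step most likely to require care is the strict positivity $0<q\sb{\lambda}$ in (iv), which hinges on the identification $\|a\|=U$ for $a>0$ together with the implication chain $\lambda<U\Rightarrow a\not\leq\lambda\Rightarrow(a-\lambda)\sp{+}\not=0\Rightarrow q\sb{\lambda}\not=0$. The inequalities themselves become routine once the key observation is in place: the carrier of $(a-\lambda)\sp{+}$ annihilates $(a-\lambda)\sp{-}$, which is exactly what collapses $q\sb{\lambda}(a-\lambda)$ to the positive part.
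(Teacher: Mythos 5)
Your proof is correct, and its overall architecture matches the paper's: the same division of labor among the five parts, the same observation that $U=\|a\|$ when $0<a$, and the same commutativity argument $0\leq(1-q\sb{\lambda})a$ for the bound $q\sb{\lambda}a\leq a$. The genuine difference is that where the paper imports the key spectral facts wholesale from \cite[Theorem 8.4]{FSyn} --- in particular the two-sided inequality $(1-q\sb{\lambda})(a-\lambda)\leq 0\leq q\sb{\lambda}(a-\lambda)$ for (iv), and parts (i), (ii), (v) outright --- you re-derive them from first principles using only the carrier axiom and the positive/negative-part identities. Your derivation $q\sb{\lambda}(a-\lambda)=(a-\lambda)\sp{+}\geq 0$, obtained from $(a-\lambda)\sp{+}q\sb{\lambda}=(a-\lambda)\sp{+}$ together with $q\sb{\lambda}(a-\lambda)\sp{-}=0$ (via $bc=0\Leftrightarrow b\dg c=0$), is a clean self-contained proof of exactly the right-hand half of the cited inequality, and your invertibility argument for (ii) and the contrapositive chain $\lambda<U\Rightarrow a\not\leq\lambda\Rightarrow(a-\lambda)\sp{+}\not=0$ for the strict positivity in (iv) replace the paper's appeals to \cite{FSyn} and its proof by contradiction. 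What the paper's route buys is brevity and consistency with the established spectral machinery; what yours buys is independence from \cite[Theorem 8.4]{FSyn}, making the theorem verifiable from the basic carrier and square-root properties alone. Both are sound.
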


\begin{proof}
Part (i) follows from \cite[Theorem 8.4 (i)]{FSyn}. Let $L$ and $U$
be the lower and upper spectral bounds for $a$. Since $0<a$, we have
$0\leq L$, whence if $\lambda<0$, then $\lambda<L$, and (ii) then
follows from \cite[Theorem 8.4 (vi)]{FSyn}. Since $0<a$, we have $a=a\sp{+}$,
whence $q\sb{0}=((a-0)\sp{+})\dg=a\dg\geq 0$. Also, $a\dg\not=0$, else
$a=0$, contradicting $0<a$, and we have (iii).

As $0\leq L\leq U$ and $\|a\|=\max\{|L|,|U|\}$, we have $U=\|a\|$. Suppose
$0<\lambda<\|A\|=U$. Then by \cite[Theorem 8.4 (ii)]{FSyn},
\setcounter{equation}{0}
\begin{equation} \label{eq:projectoid1}
(1-q\sb{\lambda})(a-\lambda)\leq 0\leq q\sb{\lambda}(a-\lambda).
\end{equation}
Thus, if $q\sb{\lambda}=0$, then $a\leq\lambda$, so $-\lambda\leq a
\leq\lambda$, whence $\|a\|\leq\lambda$, contradicting $\lambda<\|a\|$.
Therefore, $0<q\sb{\lambda}$, and by (\ref{eq:projectoid1}), we have
$\lambda q\sb{\lambda}\leq q\sb{\lambda}a=aq\sb{\lambda}$. Since
$(1-q\sb{\lambda})Ca$ and $0\leq 1-q\sb{\lambda}, a$, it follows that
$0\leq(1-q\sb{\lambda})a$, whence we also have $q\sb{\lambda}a\leq a$,
and therefore (iv) holds. Finally, (v) follows from \cite
[Theorem 8.4 (v)]{FSyn}.
\end{proof}

\begin{corollary} \label{co:subprojectionoid}
If $0<a\in A$, then there exists $0<\lambda\in\reals$ and $0<p\in P$
such that $\lambda p\leq a$.
\end{corollary}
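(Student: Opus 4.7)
The plan is to apply Theorem \ref{th:qsublambda} essentially verbatim; the corollary is really just a weaker, existential repackaging of part (iv) of that theorem.

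First I would observe that the hypothesis $0<a$ forces $\|a\|>0$, since the order-unit norm satisfies $\|a\|=0\Rightarrow a=0$ (indeed, $\|a\|=0$ together with the defining infimum would give $-\epsilon\leq a\leq\epsilon$ for all $\epsilon>0$, hence $a=0$, contradicting $0<a$). Therefore the open interval $(0,\|a\|)\subset\reals$ is nonempty, and I can choose any $\lambda$ with $0<\lambda<\|a\|$.

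Next I would invoke Theorem \ref{th:qsublambda}(iv) at this $\lambda$: setting $p:=q\sb{\lambda}=((a-\lambda)\sp{+})\dg$, the theorem gives at once that $0<p\in P$ and that
\[
\lambda p \leq pa = ap \leq a,
\]
which contains the desired inequality $\lambda p\leq a$ as its first and last term. This $\lambda$ and $p$ satisfy the conclusion.

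Since essentially everything is already packaged into Theorem \ref{th:qsublambda}, there is no real obstacle; the only substantive point is checking $\|a\|>0$ so that a legitimate $\lambda\in(0,\|a\|)$ exists. The rest is a direct quotation of part (iv).
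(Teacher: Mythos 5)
Your proof is correct and follows exactly the paper's own argument: pick $\lambda$ with $0<\lambda<\|a\|$ and take $p=q_{\lambda}$ from Theorem \ref{th:qsublambda}(iv). The extra remark that $0<a$ forces $\|a\|>0$ is a sensible detail the paper leaves implicit.
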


\begin{proof}
Choose $\lambda\in\reals$ with $0<\lambda<\|a\|$, and in Theorem
\ref{th:qsublambda} let $p=q\sb{\lambda}$.
\end{proof}

An element $s\in A$ such that $s\sp{2}=1$ is called a \emph{symmetry}, and
two elements $a,b\in A$ are said to be \emph{exchanged} by the symmetry
$s$ iff $sas=b$ (or, equivalently, iff $sbs=a$) \cite{FPsymSA}. An element
$t\in A$ is called a \emph{partial symmetry} iff $t\sp{2}=p\in P$, and
$a$ and $b$ are \emph{exchanged} by $t$ iff $tat=b$ and $tbt=a$. There is
a bijective correspondence $s\leftrightarrow p$ between symmetries $s\in A$
and projections $p\in P$ given by $s=2p-1$ and $p=\frac12(s+1)$.

\begin{lemma} \label{le:SymIneq}
Let $s\in A$ be a symmetry, $s\not=-1$, and $\lambda,\mu\in\reals$. Then
$\lambda s\leq\mu\Rightarrow\lambda\leq\mu$.
\end{lemma}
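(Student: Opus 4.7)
The plan is to exploit the bijection $s\leftrightarrow p$ between symmetries and projections, reducing the inequality $\lambda s\leq\mu$ to an inequality among scalar multiples of a nonzero projection, where positivity of the cone forces the desired conclusion.

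First, I would set $p:=\frac12(s+1)$, so that $p\in P$ and $s=2p-1$. The hypothesis $s\neq-1$ gives $p\neq0$, and since $p\in P\subseteq A\sp{+}$ we in fact have $0<p$. A direct computation using $s\sp{2}=1$ yields $sp=\frac12(s\sp{2}+s)=\frac12(1+s)=p$, and similarly $ps=p$; in particular $sCp$.

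Next, I would multiply the inequality $\lambda s\leq\mu\cdot1$ by $p$ on the left. The excerpt supplies exactly what is needed: if $a,b,c\in A$ with $a\leq b$, $0\leq c$, $cCa$, and $cCb$, then $ca\leq cb$. Taking $a=\lambda s$, $b=\mu\cdot1$, $c=p$, all the commutation hypotheses are immediate, so we obtain $\lambda ps\leq\mu p$, i.e.\ $\lambda p\leq\mu p$. Rearranging, $(\mu-\lambda)p\in A\sp{+}$.

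Finally, I would argue that $\mu-\lambda\geq0$. If instead $\mu-\lambda<0$, then scaling the positive element $p$ by the positive scalar $\lambda-\mu$ gives $(\lambda-\mu)p\in A\sp{+}$ as well; but then both $(\mu-\lambda)p$ and its negative lie in the positive cone $A\sp{+}$, which forces $(\mu-\lambda)p=0$ and hence $p=0$ (as $\mu-\lambda\neq0$), contradicting $p\neq 0$. Therefore $\lambda\leq\mu$.

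There is really no serious obstacle here; the only point requiring a bit of care is verifying that the monotonicity-under-multiplication lemma from the excerpt applies, which reduces to checking the commutation relation $sp=ps=p$. Once that is in hand, the rest is just observing that a nonzero positive element cannot be made negative by a negative scalar multiple while remaining in the cone.
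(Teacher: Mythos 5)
Your proof is correct and follows essentially the same route as the paper's: both pass to the projection $p=\frac12(s+1)$, use $s\neq-1$ to get $0<p$, multiply the inequality by the commuting positive element $p$ to obtain $0\leq(\mu-\lambda)p$, and then rule out $\mu<\lambda$. The only cosmetic difference is that you compute $sp=ps=p$ directly where the paper writes $s=p-p^{\perp}$ and kills the $p^{\perp}$ term.
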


\begin{proof}
Assume the hypotheses. Suppose $\lambda s\leq\mu$, and let $p=\frac12(s+1)$
be the projection corresponding to $s$. Then $s=2p-1=p-p\sp{\perp}$, and
since $s\not=-1$, we have $0<p$. Then $\lambda p-\lambda p\sp{\perp}
\leq\mu$, so $\lambda p=(\lambda p-\lambda p\sp{\perp})p\leq\mu p$, i.e.,
$0\leq(\mu-\lambda)p$. If $\mu-\lambda<0$, than $(\mu-\lambda)p<0$, contradicting
$0\leq(\mu-\lambda)p$, whence $0\leq\mu-\lambda$, so $\lambda\leq\mu$.
\end{proof}

A subset $S\subseteq A$ is called a \emph{sub-synaptic algebra} of $A$ iff
$S$ is a linear subspace of $A$, $1\in S$, and $S$ is closed under the
formation of squares, square roots, carriers, and inverses, in which case
$S$ is a synaptic algebra in its own right. For instance, if $M\subseteq A$,
then $C(M)$ is a sub-synaptic algebra of $A$. In particular, if $M$ is a
commutative subset of $A$, then $CC(M)$ is a commutative sub-synaptic algebra
of $A$.

If $p\in P$, then $pAp:=\{pap:a\in A\}=\{a\in A:a=pa=ap\}$ is a linear
subspace of $A$ that is closed under the formation of squares, square roots,
carriers, and inverses; it is a synaptic algebra in its own right with $p$ as
its unit element and with $\{prp:r\in R\}$ as its enveloping algebra. The OML
of projections in $pAp$ is the interval $P[0,p]:=\{q\in P:q\leq p\}$. If $t$
is a partial symmetry in $A$ with $t\sp{2}=p$, then $t$ is a symmetry in
$pAp$.

The \emph{center} of $A$ is the commutative sub-synaptic algebra $C(A)$.
If $C(A)=\reals$, then $A$ is called a \emph{factor}. For instance, the
self-adjoint part ${\mathcal B}\sp{sa}({\mathfrak H})$ of the unital
C$\sp{\ast}$-algebra of all bounded linear operators on a Hilbert space
${\mathfrak H}$ is a factor.

\begin{theorem} \label{th:factorcondition}
The synaptic algebra $A$ is a factor iff the only projections in $C(A)$ are
$0$ and $1$.
\end{theorem}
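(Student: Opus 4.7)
The ``only if'' direction is immediate: when $C(A)=\reals$, any projection $\lambda\cdot 1\in C(A)$ satisfies $\lambda\sp{2}=\lambda$, forcing $\lambda\in\{0,1\}$. The content lies in the converse.

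Assume every projection in $C(A)$ is $0$ or $1$, fix $a\in C(A)$, and aim to show $a$ is a scalar multiple of $1$. The first step is to promote the hypothesis from $P\cap C(A)$ to the spectral projections of $a$. Since $a$ commutes with everything in $A$, we have $A\subseteq C(a)$, and the order-reversing behaviour of the commutant then gives $CC(a)=C(C(a))\subseteq C(A)$. By Theorem~\ref{th:qsublambda}(i) the projections $q\sb{\lambda}=((a-\lambda)\sp{+})\dg$ all lie in $CC(a)$, hence in $C(A)$, so each $q\sb{\lambda}\in\{0,1\}$.

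Next, shift to the positive case by setting $b:=a-L\cdot 1$, where $L$ is the lower spectral bound of $a$. Then $b\in C(A)\cap A\sp{+}$ (using that $C(A)$ is a linear subspace containing $1$), and the spectral projections of $b$ inherit the dichotomy $q\sb{\lambda}\in\{0,1\}$ via the same commutant argument. If $b=0$, then $a=L\cdot 1$ and we are done, so assume $\|b\|>0$. For each $0<\lambda<\|b\|$ part (iv) of Theorem~\ref{th:qsublambda} applied to $b$ gives $q\sb{\lambda}>0$, which forces $q\sb{\lambda}=1$; substituting into the inequality $\lambda q\sb{\lambda}\leq q\sb{\lambda}b\leq b$ from the same part yields $\lambda\cdot 1\leq b$ for every $0<\lambda<\|b\|$.

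The last step is a standard Archimedean argument in the order-unit normed space $A$: the bounds $\|b\|-\epsilon\leq b\leq\|b\|$ for every $\epsilon>0$ give $0\leq\|b\|\cdot 1 - b\leq\epsilon\cdot 1$, so the order-unit norm of $\|b\|\cdot 1 - b$ is $0$. Hence $b=\|b\|\cdot 1$ and $a=(L+\|b\|)\cdot 1\in\reals$. The only step requiring real thought is the containment $CC(a)\subseteq C(A)$, which is what transfers the hypothesis on $C(A)$ to the spectral projections of $a$; everything else is bookkeeping from Theorem~\ref{th:qsublambda} and the order-unit norm.
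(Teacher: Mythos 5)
Your proof is correct, and its overall strategy coincides with the paper's: both reduce the problem to showing that the spectral projections of a central element $a$ are themselves central, hence equal to $0$ or $1$, and then conclude that $a$ is a scalar. Your justification of the first step (from $a\in C(A)$ deduce $A\subseteq C(a)$, hence $CC(a)\subseteq C(A)$, and the spectral projections lie in $CC(a)$) is a direct derivation of what the paper simply cites as \cite[Theorem 8.10]{FSyn}. Where you genuinely diverge is in the final step: the paper invokes \cite[Theorem 8.4 (iii), (vii)]{FSyn} to locate a jump point $\alpha$ of the spectral resolution, concludes $\spec(a)=\{\alpha\}$, and then cites \cite[Theorem 8.9]{FSyn} to get $a=\alpha 1$; you instead translate to the positive element $b:=a-L$, use Theorem \ref{th:qsublambda}(iv) (proved in this paper) to extract $\lambda\leq b$ for all $0<\lambda<\|b\|$, and finish with an Archimedean squeeze in the order-unit norm. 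Your route is more self-contained --- it relies only on machinery already developed or quoted in this paper rather than on the full spectral theorem of \cite{FSyn} --- at the cost of a slightly longer computation; the paper's route is shorter but leans on two further external results.
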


\begin{proof}
If $C(A)=\reals$, then clearly the only projections in $C(A)$ are $0$
and $1$. Conversely, suppose that the only projections in $C(A)$ are $0$
and $1$ and let $a\in C(A)$. By \cite[Theorem 8.10]{FSyn}, the spectral
resolution $\{p\sb{\lambda}:\lambda\in\reals\}$ of $a$ is contained in
$C(A)\cap P$, whence $p\sb{\lambda}\in\{0,1\}$ for all $\lambda\in\reals$.
Therefore, by \cite[Theorem 8.4 (iii) and (vii)]{FSyn}, there exists
$\alpha\in\reals$ such that $p\sb{\lambda}=0$ for $\lambda<\alpha$ and
$p\sb{\lambda}=1$ for $\alpha\leq\lambda$, and it follows that $\spec(a)=
\{\alpha\}$. Consequently, by \cite[Theorem 8.9]{FSyn}, $a=\alpha1=\alpha$.
\end{proof}

\section{Two commuting projections} \label{sc:CommutingProjs}

\begin{lemma} {\rm\cite[Lemma 2]{Kad}} \label{lm:KadL2}
If $p,q\in P$, then $p\wedge q$ is also the infimum of $p$ and $q$
in $A\sp{+}$.
\end{lemma}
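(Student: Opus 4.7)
The plan is to reduce the statement to the already-established Lemma \ref{le:sup,inf,p,q}, which says that $p\wedge q$ coincides with the infimum of $p$ and $q$ taken in the effect algebra $E$. The key observation is that any lower bound of $p$ and $q$ in $A\sp{+}$ is automatically an effect, so ``computing the infimum in $A\sp{+}$'' collapses to ``computing the infimum in $E$,'' which was already handled.

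More explicitly, first I would note that $p\wedge q\in P\subseteq A\sp{+}$ and $p\wedge q\leq p,q$, so $p\wedge q$ is at any rate a lower bound of $\{p,q\}$ in $A\sp{+}$. Then, to check that it is the greatest such lower bound, take an arbitrary $a\in A\sp{+}$ with $a\leq p$ and $a\leq q$. Since $p\in P\subseteq E$ we have $p\leq 1$, so transitivity gives $0\leq a\leq 1$, i.e., $a\in E$. Thus $a$ is in fact a lower bound of $\{p,q\}$ in $E$. Applying Lemma \ref{le:sup,inf,p,q}, the infimum of $p$ and $q$ in $E$ equals $p\wedge q$, and so $a\leq p\wedge q$. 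This is exactly what is needed.

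There is no real obstacle here; the whole content is the transitivity remark $a\leq p\leq 1$ that forces any candidate lower bound in $A\sp{+}$ to live inside $E$, at which point Lemma \ref{le:sup,inf,p,q} finishes the argument.
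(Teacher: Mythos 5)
Your proposal is correct and follows essentially the same route as the paper: observe that any lower bound $a\in A\sp{+}$ of $p$ and $q$ satisfies $0\leq a\leq p\leq 1$, hence lies in $E$, and then invoke Lemma \ref{le:sup,inf,p,q}. No issues.
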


\begin{proof}
Clearly, $p\wedge q\in A\sp{+}$. Suppose that $a\in A\sp{+}$ and $a\leq p,q$.
Then $0\leq a\leq p\leq 1$, so $a\in E$, and since $p\wedge q$ is also the
infimum of $p$ and $q$ in $E$ (Lemma \ref{le:sup,inf,p,q}), we have $a\leq p
\wedge q$. Therefore, $p\wedge q$ is the infimum of $p$ and $q$ in $A\sp{+}$.
\end{proof}

\begin{lemma} {\rm(Cf. \cite[Proof of Theorem 1]{Kad})} \label{Le:SeeTheorem1Kad}
Suppose that $V$ is a vector subspace of $A$;\ $p,q\in P\cap V$;\ $p\wedge q
\in V$;\ there exists $w\in P\cap V$ with $p,q\leq w$;\ and $p\wedge\sb{V}q$
exists. Put $p\sb{1}:=p-p\wedge\sb{V}q$ and $q\sb{1}:=q-p\wedge\sb{V}q$.
Then{\rm:}
\begin{enumerate}
\item $p\wedge\sb{V}q=p\wedge q\in P\cap V$.
\item $p\sb{1}, q\sb{1}\in P\cap V$ with $p\sb{1}\wedge\sb{V}q\sb{1}=0$.
\item $p\sb{1}q\sb{1}=q\sb{1}p\sb{1}=0$.
\item $p$ commutes with $q$.
\end{enumerate}
\end{lemma}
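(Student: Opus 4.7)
The plan is to dispatch (i)--(ii) as bookkeeping and then focus on (iii), from which (iv) follows by a direct expansion. I expect the only real insight to be in (iii), and that is where the upper bound $w$ earns its keep.

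For (i), I would sandwich the two infima. Since $p\wedge q\in V$ with $p\wedge q\leq p,q$, it is a lower bound of $\{p,q\}$ in $V$, so $p\wedge q\leq p\wedge\sb{V}q$; in particular $0\leq p\wedge\sb{V}q$, so $p\wedge\sb{V}q\in A\sp{+}$. Then $p\wedge\sb{V}q$ is also a lower bound of $\{p,q\}$ in $A\sp{+}$, and Lemma \ref{lm:KadL2} yields the reverse inequality, giving (i). For (ii), I would first invoke the standard fact that $r\leq s$ in $P$ implies $s-r\in P$ (immediate from $rs=sr=r$), applied with $r=p\wedge q$, to obtain $p\sb{1},q\sb{1}\in P$; membership in $V$ follows from linearity. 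To see $p\sb{1}\wedge\sb{V}q\sb{1}=0$, I would translate by $p\wedge\sb{V}q$: any lower bound $x\in V$ of $\{p\sb{1},q\sb{1}\}$ yields $x+p\wedge\sb{V}q\in V$ as a lower bound of $\{p,q\}$, which forces $x+p\wedge\sb{V}q\leq p\wedge\sb{V}q$, i.e., $x\leq 0$; and $0\in V$ is manifestly a lower bound of $\{p\sb{1},q\sb{1}\}$.

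For (iii), the decisive move is to form the candidate $x:=p\sb{1}+q\sb{1}-w\in V$. Since $q\sb{1}\leq q\leq w$ one has $x\leq p\sb{1}$, and symmetrically $x\leq q\sb{1}$; thus by (ii), $x\leq 0$, i.e., $p\sb{1}+q\sb{1}\leq w$. Read inside the sub-synaptic algebra $wAw$ (whose unit is $w$), this says $p\sb{1}\leq w-q\sb{1}$, the orthogonality relation in the projection lattice $P[0,w]$ of $wAw$. Hence $p\sb{1}q\sb{1}=q\sb{1}p\sb{1}=0$, as required. For (iv), I would just expand $p\sb{1}q\sb{1}=0$: since $p\wedge q\leq p,q$ in $P$ gives $p(p\wedge q)=p\wedge q=(p\wedge q)q$, the cross-terms collapse and one is left with $pq=p\wedge q$; the symmetric expansion of $q\sb{1}p\sb{1}=0$ yields $qp=p\wedge q$, so $pCq$.

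The main obstacle is the choice in (iii): one must recognize that the hypothesis $p,q\leq w\in V$ exists precisely to manufacture the nontrivial element $p\sb{1}+q\sb{1}-w$ of $V$ that is dominated by both $p\sb{1}$ and $q\sb{1}$. Once this observation is in hand, every remaining step is routine OML bookkeeping already available inside a synaptic algebra.
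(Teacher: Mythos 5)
Your proposal is correct and follows essentially the same route as the paper: the same sandwich argument for (i), the same translation by $p\wedge\sb{V}q$ for (ii), and the identical key element $p\sb{1}+q\sb{1}-w$ for (iii). The only (minor) divergence is in (iv), where you expand $p\sb{1}q\sb{1}=0$ directly to get $pq=p\wedge q=qp$, while the paper instead notes that $p\wedge q$ commutes with $p\sb{1}$ and $q\sb{1}$ and adds commuting pairs; both are routine and your version even yields the extra identity $pq=p\wedge q$.
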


\begin{proof}
Put $g:=p\wedge q\in P\cap V$ and $g\sb{V}:=p\wedge\sb{V}q\in V$. By Lemma
\ref{lm:KadL2}, $g$ is the infimum of $p$ and $q$ in $A\sp{+}$; hence,
\setcounter{equation}{0}
\begin{equation} \label{eq:Kad1}
\text{if\ }a\in A\sp{+}\text{\ and\ }a\leq p, q\text{\ then\ } a\leq g.
\end{equation}
Since $0\in V$ and $0\leq p, q$, it follows that $0\leq g\sb{V}$, hence by
(\ref{eq:Kad1}) with $a:=g\sb{V}$, we have $g\sb{V}\leq g$. Also, $g\in V$
and $g\leq p, q$, so $g\leq g\sb{V}$. Thus, $g\sb{V}=g$, and we have (i).

By (i), $g\sb{V}\in P\cap V$, and since $g\sb{V}\leq p$, it follows
that $p\sb{1}=p-g\sb{V}\in P\cap V$. Likewise, $q\sb{1}=q-g\sb{V}\in P
\cap V$. Evidently $0\leq p\sb{1}, q\sb{1}\in V$. Suppose that $v\in V$ and
$v\leq p\sb{1},q\sb{1}$. Then $v+g\sb{V}\in V$ with $v+g\sb{V}\leq p,q$,
whence $v+g\sb{V}\leq g\sb{V}$, so $v\leq 0$, and we have $p\sb{1}\wedge\sb{V}
q\sb{1}=0$. This completes the proof of (ii).

Since $p\sb{1}\leq p\leq w$ and $q\sb{1}\leq q\leq w$, we have $p\sb{1}-w,
q\sb{1}-w\leq 0$, whence $p\sb{1}+q\sb{1}-w\leq p\sb{1},q\sb{1}$. Therefore,
$p\sb{1}+q\sb{1}-w\leq p\sb{1}\wedge\sb{V}q\sb{1}=0$, whence $p\sb{1}\leq
w-q\sb{1}$. As $w$ and $q\sb{1}$ are projections and $q\sb{1}\leq w$, it
follows that $w-q\sb{1}$ is a projection, whereupon $p\sb{1}=p\sb{1}
(w-q\sb{1})=p\sb{1}w-p\sb{1}q\sb{1}=p\sb{1}-p\sb{1}q\sb{1}$, so $p\sb{1}
q\sb{1}=0$, and we have (iii).

By (i), we have $p\sb{1}=p-p\wedge q$, so $p\wedge q=p-p\sb{1}\leq 1-p\sb{1}$,
and it follows that $(p\wedge q)Cp\sb{1}$. Likewise, $(p\wedge q)Cq\sb{1}$.
By (iii), $p\sb{1}Cq\sb{1}$, and therefore $(p\sb{1}+p\wedge q)C(q\sb{1}+
p\wedge q)$, i.e., $pCq$, and we have (iv).
\end{proof}

\begin{theorem} {\rm(Cf. \cite[Theorem 1]{Kad})} \label{th:KadTh1}
Two projections $p$ and $q$ in $A$ commute iff there exists a vector subspace
$V$ of $A$ such that $p,\,q,\,p\wedge q\in V$, there is a projection $w\in V
\cap P$ with $p,q\leq w$, and the infimum $p\wedge\sb{V}q$ in $V$ of the two
projections exists.
\end{theorem}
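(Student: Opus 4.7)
The theorem is an iff, so I would handle the two directions separately. The $(\Leftarrow)$ direction is essentially already in hand: should such a $V$ exist, Lemma \ref{Le:SeeTheorem1Kad}(iv) gives $pCq$ immediately, so nothing new is needed on that side.

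For the $(\Rightarrow)$ direction, assume $pCq$. My plan is to exhibit a small explicit $V$, namely $V:=\operatorname{span}\sb{\reals}\{1, p, q, pq\}\subseteq A$. Because $pCq$, the product $pq$ is a projection and coincides with $p\wedge q$, so $p,\,q,\,p\wedge q\in P\cap V$, and $w:=1\in P\cap V$ satisfies $p,q\leq w$. It remains to show that $p\wedge\sb{V}q$ exists in $V$, and I claim that it equals $pq$.

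To prove this, introduce the four projections $e\sb{1}:=pq$, $e\sb{2}:=p-pq$, $e\sb{3}:=q-pq$, and $e\sb{4}:=1-p-q+pq$. Using $pCq$, a short calculation shows that the $e\sb{i}$ are pairwise orthogonal projections, they sum to $1$, and each lies in $V$; moreover $p=e\sb{1}+e\sb{2}$ and $q=e\sb{1}+e\sb{3}$, so $V=\operatorname{span}\sb{\reals}\{e\sb{1}, e\sb{2}, e\sb{3}, e\sb{4}\}$. The key step is then the following positivity criterion: for $v=\sum\sb{i}\mu\sb{i}e\sb{i}\in V$ one has $v\geq 0$ iff $\mu\sb{i}\geq 0$ for every $i$ with $e\sb{i}\neq 0$. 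The $(\Leftarrow)$ implication is immediate from $v=\bigl(\sum\sb{i}\sqrt{\mu\sb{i}}\,e\sb{i}\bigr)\sp{2}$, using orthogonality of the $e\sb{i}$. For the $(\Rightarrow)$ implication, I would apply the order-preserving quadratic map $b\mapsto e\sb{j}be\sb{j}$ to $v\geq 0$, which yields $\mu\sb{j}e\sb{j}\geq 0$; since $e\sb{j}\neq 0$, a negative $\mu\sb{j}$ would force a nonzero $\mu\sb{j}e\sb{j}\leq 0$, incompatible with $\mu\sb{j}e\sb{j}\geq 0$.

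Granting the criterion, the conditions $v\leq p$ and $v\leq q$ each translate into explicit sign conditions on the coordinates $\mu\sb{i}$, and their conjunction is precisely the condition $v\leq e\sb{1}=pq$. Hence $pq$ is the infimum of $p$ and $q$ in $V$, which completes the construction. I expect the positivity criterion to be the only substantive step; the rest is bookkeeping, and the backward direction is a one-line appeal to Lemma \ref{Le:SeeTheorem1Kad}(iv).
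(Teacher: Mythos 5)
Your proof is correct, but it takes a genuinely different route from the paper's. For the forward direction the paper takes $V:=CC(\{p,q\})$, observes that this commutative sub-synaptic algebra is closed under absolute values and carriers, and then invokes \cite[Theorem 5.11]{vectlat} to conclude that $V$ is a lattice, so that $p\wedge\sb{V}q$ exists; the verification that $p,q,p\wedge q, 1\in V$ is immediate. You instead build the minimal witness $V=\operatorname{span}\sb{\reals}\{e\sb{1},e\sb{2},e\sb{3},e\sb{4}\}$ from the four pairwise orthogonal projections $pq$, $p-pq$, $q-pq$, $(1-p)(1-q)$, and prove directly that $pq$ is the infimum in $V$ via the positivity criterion for coordinates. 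Your criterion is sound: the backward implication follows from orthogonality (or simply from $\mu\sb{i}e\sb{i}\geq0$ termwise), and the forward implication correctly uses the order-preserving quadratic map $b\mapsto e\sb{j}be\sb{j}$ together with the fact that the nonzero $e\sb{i}$ are linearly independent (multiply by $e\sb{j}$), so the coordinates are well defined. The trade-off: the paper's argument is one line but leans on a nontrivial imported theorem about vector lattices in synaptic algebras, whereas yours is entirely self-contained, uses only facts already recorded in Section 2 (squares are positive, the quadratic map preserves order), and exhibits an at-most-four-dimensional $V$ rather than the typically much larger bicommutant. The backward direction is identical in both treatments, namely a direct appeal to Lemma \ref{Le:SeeTheorem1Kad}(iv).
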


\begin{proof}
In view of Lemma \ref{Le:SeeTheorem1Kad}, we have only to prove that if $p$ and
$q$ commute, then there is a vector subspace $V$ of $A$ with the indicated
properties. It suffices to take $V:=CC(\{p,q\})$, noting that $V$ is commutative,
and $a\in V\Rightarrow |a|, a\dg\in V$. Therefore, by \cite[Theorem 5.11]{vectlat}
$V$ is a lattice.
\end{proof}

\begin{corollary} \label{co:KadTh1}
Let $p,q\in P$. Then{\rm: (i)} If $p\wedge\sb{A}q$
exists, then $pq=qp$. {\rm(ii)} If $p\wedge\sb{A}q=0$, then $pq=qp=0$.
\end{corollary}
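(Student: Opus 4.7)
The plan is to derive both parts directly from Theorem \ref{th:KadTh1} by taking the ambient vector subspace to be $V=A$ itself, and then using the basic properties of commuting projections.

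For part (i), assume that $p\wedge\sb{A}q$ exists. I would verify the hypotheses of Theorem \ref{th:KadTh1} with $V:=A$. Certainly $A$ is a vector subspace of $A$ and contains $p,q,p\wedge q$. A projection $w\in P\cap V$ with $p,q\leq w$ is furnished by $w:=1\in P$. Finally, $p\wedge\sb{V}q=p\wedge\sb{A}q$ exists by assumption. Hence Theorem \ref{th:KadTh1} applies and yields $pCq$, i.e., $pq=qp$.

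For part (ii), assume that $p\wedge\sb{A}q=0$. By (i), $p$ and $q$ commute, so by the properties of commuting projections recalled in the preamble to Lemma \ref{le:sup,inf,p,q} we have $p\wedge q=pq=qp$. On the other hand, Lemma \ref{Le:SeeTheorem1Kad}(i), applied to $V:=A$ exactly as above, gives $p\wedge\sb{A}q=p\wedge q$. Combining these two equalities with the hypothesis $p\wedge\sb{A}q=0$ yields $pq=qp=0$, which is the desired conclusion.

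There is no real obstacle: the substantive work has already been done in Lemma \ref{Le:SeeTheorem1Kad} and Theorem \ref{th:KadTh1}, and the corollary simply specializes $V$ to $A$ and notes that $w=1$ is the obvious common upper bound. The only thing to be a bit careful about is not to circularly invoke the very statement being proved; but since Theorem \ref{th:KadTh1} is phrased for arbitrary vector subspaces $V$, the specialization $V=A$ is legitimate and immediate.
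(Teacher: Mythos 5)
Your proof is correct and follows essentially the same route as the paper: both parts come from specializing Lemma \ref{Le:SeeTheorem1Kad} and Theorem \ref{th:KadTh1} to $V=A$ with $w=1$. The only cosmetic difference is in (ii), where the paper invokes part (iii) of Lemma \ref{Le:SeeTheorem1Kad} directly (noting that $p\sb{1}=p$ and $q\sb{1}=q$ when $p\wedge\sb{A}q=0$), whereas you route through part (i) of the corollary, the identity $p\wedge q=pq$ for commuting projections, and part (i) of the lemma---both are equally valid.
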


\begin{proof}
Part (i) is an obvious consequence of Theorem \ref{th:KadTh1}. To
prove (ii), assume that $p\wedge\sb{A}q=0$. Then in Lemma \ref
{Le:SeeTheorem1Kad} with $V=A$, we have $p=p\sb{1}$ and $q=q\sb{1}$,
whence $pq=0$ by part (iii) of the lemma.
\end{proof}

In the following theorem we generalize part (ii) of Corollary \ref{co:KadTh1}
to an arbitrary pair of elements $a,b\in A$. In \cite[Corollary 8]{Kad},
Kadison obtains this result and its corollary for the special case in
which $A$ is the self-adjoint part of a W$\sp{\ast}$-algebra.

\begin{theorem} \label{Th:ab=0}
If $a,b\in A$ and $a\wedge\sb{A}b=0$, then $ab=ba=0$.
\end{theorem}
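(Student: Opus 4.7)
The plan is to reduce the assertion to the projection case already handled in Corollary~\ref{co:KadTh1}~(ii), by passing from $a,b$ to their carriers $a\dg,b\dg$ and exploiting $ab=0\Leftrightarrow a\dg b\dg=0$. Since $0=a\wedge\sb{A}b$ is a lower bound of $a$ and $b$, we already have $a,b\in A\sp{+}$.

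The first substantive move is to introduce the positive and negative parts of $a-b$: set $u:=(a-b)\sp{+}$ and $v:=(a-b)\sp{-}$, so $u,v\in A\sp{+}$, $a-b=u-v$, and $uv=0$. Then the common element $a-u=b-v$ is a lower bound of both $a$ and $b$ in $A$ (since $u,v\geq 0$), so the hypothesis forces $a-u=b-v\leq 0$, yielding simultaneously
\[
0\leq a\leq u,\qquad 0\leq b\leq v,\qquad uv=0.
\]

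The crux is then an intermediate monotonicity lemma: if $0\leq x\leq y$ in $A$, then $x\dg\leq y\dg$. For $y=0$ this is trivial; otherwise, rescaling by $\|y\|$ turns $x/\|y\|$ and $y/\|y\|$ into effects with $x/\|y\|\leq y/\|y\|\leq y\dg$ (using that $e\dg$ is the smallest projection above an effect $e$), and since $(x/\|y\|)\dg=x\dg$, we conclude $x\dg\leq y\dg$. Applied to our situation this gives $a\dg\leq u\dg$ and $b\dg\leq v\dg$. Meanwhile, iterated use of $xy=0\Leftrightarrow x\dg y=0$ together with $xy=0\Leftrightarrow yx=0$ promotes $uv=0$ to $u\dg v\dg=0$. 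Because comparable projections satisfy $p\leq q\Rightarrow pq=qp=p$, we have $a\dg u\dg=a\dg$ and $v\dg b\dg=b\dg$, so
\[
a\dg b\dg=a\dg(u\dg v\dg)b\dg=0,
\]
and one further application of the carrier equivalence delivers $ab=0$. Then $ba=0$ is automatic.

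The main obstacle is the monotonicity lemma $0\leq x\leq y\Rightarrow x\dg\leq y\dg$ on $A\sp{+}$; once it is secured, the remainder is short bookkeeping with the two characterizations of the carrier (one on all of $A$, one on $E$) and the symmetry $xy=0\Leftrightarrow yx=0$ recorded in the preliminaries of Section~2.
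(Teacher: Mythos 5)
Your proposal is correct, but it follows a genuinely different (and more self-contained) route than the paper. The paper's proof is essentially a two-line appeal to the machinery of the \emph{generalized infimum} $a\sqcap b:=\frac12(a+b-|a-b|)$ developed in \cite{vectlat}: Lemma 4.1(i) there gives $a\sqcap b\leq a,b$, hence $a\sqcap b\leq a\wedge\sb{A}b=0$, and Lemma 4.4(ii) there converts $a,b\in A\sp{+}$ with $a\sqcap b\leq 0$ directly into $ab=ba=0$. Your common lower bound $a-u=b-v$ with $u:=(a-b)\sp{+}$, $v:=(a-b)\sp{-}$ is in fact exactly $a\sqcap b$, so your first step is an inline re-derivation of \cite[Lemma 4.1(i)]{vectlat} from the identities $a-b=u-v$, $u,v\geq 0$ recorded in Section 2. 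Where you genuinely diverge is in the second half: instead of citing \cite[Lemma 4.4(ii)]{vectlat}, you transfer the orthogonality $uv=0$ of the positive and negative parts to the carriers via the defining equivalence $xy=0\Leftrightarrow x\dg y=0$ together with $xy=0\Leftrightarrow yx=0$, and you supply the needed monotonicity $0\leq x\leq y\Rightarrow x\dg\leq y\dg$ by rescaling into $E$ and using that $e\dg$ is the least projection above an effect $e$; all of these ingredients are available in the paper's Section 2 (or in \cite[Theorem 2.10]{FSyn}). The paper's version buys brevity at the cost of two external citations; yours buys a proof readable from the preliminaries alone and makes visible that the real mechanism is the orthogonality $(a-b)\sp{+}(a-b)\sp{-}=0$ pushed up to carriers. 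Both are valid.
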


\begin{proof}
Assume the hypotheses. We have $0=a\wedge\sb{A}b\leq a,b$, i.e., $a,b
\in A\sp{+}$. We shall use the notion of the \emph{generalized infimum}
of $a$ and $b$, defined and denoted by $a\sqcap b:=\frac{1}{2}(a+b-|a-b|)$ \cite
[\S 4]{vectlat}. By \cite[Lemma 4.1 (i)]{vectlat}, $a\sqcap b\leq a,b$,
whence $a\sqcap b\leq a\wedge\sb{A}b=0$. Thus, since $a,b\in A\sp{+}$,
$ab=ba=0$ by \cite[Lemma 4.4 (ii)]{vectlat}.
\end{proof}

\begin{corollary}
If $a,b\in A$, $c:=a\wedge\sb{A}b$ exists, $cCa$, and $cCb$, then $aCb$.
\end{corollary}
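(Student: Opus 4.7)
The plan is to reduce the statement to Theorem \ref{Th:ab=0} by translating so that the infimum becomes $0$. Since $c = a \wedge_A b$ exists, we have $c \leq a$ and $c \leq b$, so the elements $a - c$ and $b - c$ lie in $A^+$. I claim that $(a - c) \wedge_A (b - c) = 0$. Indeed, $0$ is a lower bound by the previous sentence, and if $x \in A$ with $x \leq a - c$ and $x \leq b - c$, then $x + c \leq a$ and $x + c \leq b$, whence $x + c \leq a \wedge_A b = c$, so $x \leq 0$. Thus $0$ is the greatest lower bound.

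Next, I would apply Theorem \ref{Th:ab=0} to the pair $a - c$ and $b - c$ to conclude that $(a-c)(b-c) = (b-c)(a-c) = 0$ as elements of the enveloping algebra $R$. Expanding the first equation gives $ab - ac - cb + c^2 = 0$, and expanding the second gives $ba - bc - ca + c^2 = 0$. Using the hypotheses $cCa$ and $cCb$, we have $ac = ca$ and $bc = cb$, so subtracting the two identities yields $ab - ba = 0$, i.e., $ab = ba$ in $R$, which is precisely the statement $aCb$.

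The only place that requires care is the verification that $(a-c)\wedge_A (b-c) = 0$, since one must manipulate inequalities involving the unknown lower bound $x$ and use that $c$ is actually an infimum in all of $A$ (not merely in some subspace). The rest is routine algebra in $R$, where one must remember that $ab$ and $ba$ need not individually lie in $A$, but the equation $ab = ba$ in $R$ is exactly what $aCb$ demands, so this causes no difficulty.
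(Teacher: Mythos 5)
Your proposal is correct and follows essentially the same route as the paper: both reduce to $(a-c)\wedge_A(b-c)=0$ by the same translation argument, invoke Theorem \ref{Th:ab=0} to get $(a-c)(b-c)=(b-c)(a-c)=0$, and then expand using $cCa$ and $cCb$ to conclude $ab=ba$. No gaps.
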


\begin{proof}
Assume the hypotheses. Then $0\leq a-c, b-c$, and if $k\in A$ with $k
\leq a-c, b-c$, it follows that $k+c\leq a,b$, whence $k+c\leq c$,
so $k\leq 0$. Therefore $(a-c)\wedge\sb{A}(b-c)=0$, so by Theorem
\ref{Th:ab=0}, $(a-c)(b-c)=(b-c)(a-c)=0$. Consequently, $ab=cb+ac-c\sp{2}
=bc+ca-c\sp{2}=ba$.
\end{proof}

\section{The antilattice theorem}

In this section we prove our main theorem (Theorem \ref{th:MainTh}) giving
necessary and sufficient conditions for a synaptic algebra $A$ with a complete
projection lattice $P$ to be an antilattice. In particular, Theorem \ref
{th:MainTh} shows that a synaptic algebra with a complete projection lattice
is an antilattice iff it is a factor. To begin with, we have the following.

\begin{theorem} \label{th:ALisFac}
If $A$ is an antilattice, then $A$ is a factor.
\end{theorem}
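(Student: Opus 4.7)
The plan is to show that if $A$ is an antilattice then the only projections in the center $C(A)$ are $0$ and $1$; this suffices by Theorem \ref{th:factorcondition}. So let $p \in C(A) \cap P$ and consider $p$ together with its orthocomplement $p\sp{\perp} = 1-p$.

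The key step is to prove that $p \wedge\sb{A} p\sp{\perp}$ exists in $A$ and equals $0$. Clearly $0$ is a lower bound for $p$ and $p\sp{\perp}$ in $A$, so I only need to show that every $a \in A$ with $a \leq p$ and $a \leq p\sp{\perp}$ satisfies $a \leq 0$. Since $p + p\sp{\perp} = 1$, I can write $a = pa + p\sp{\perp}a$, and because $p \in C(A)$ commutes with $a$ (and with $p\sp{\perp}$), a short calculation gives $pa = pap$ and $p\sp{\perp}a = p\sp{\perp}a p\sp{\perp}$. Now I apply the order-preserving quadratic maps $x \mapsto pxp$ and $x \mapsto p\sp{\perp}xp\sp{\perp}$: from $a \leq p\sp{\perp}$ and $pp\sp{\perp}p = 0$ I get $pap \leq 0$, and from $a \leq p$ and $p\sp{\perp}pp\sp{\perp} = 0$ I get $p\sp{\perp}ap\sp{\perp} \leq 0$. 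Adding yields $a \leq 0$, as required.

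Having established $p \wedge\sb{A} p\sp{\perp} = 0$, the antilattice hypothesis forces $p$ and $p\sp{\perp}$ to be comparable. If $p \leq p\sp{\perp}$ then $p = p \cdot p\sp{\perp} = 0$, and dually if $p\sp{\perp} \leq p$ then $p\sp{\perp} = 0$, so $p = 1$. In either case $p \in \{0,1\}$, and Theorem \ref{th:factorcondition} then delivers $C(A) = \reals$.

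I expect the only subtle point to be the decomposition $a = pap + p\sp{\perp} a p\sp{\perp}$, which genuinely uses centrality of $p$ (so that $pa, p\sp{\perp}a \in A$ via the Jordan product and so that these summands lie in $pAp$ and $p\sp{\perp}Ap\sp{\perp}$ respectively); everything else is immediate from the order-theoretic properties of the quadratic mapping already recorded in Section~2. No completeness of $P$ is needed for this direction, which is consistent with the phrasing of the theorem.
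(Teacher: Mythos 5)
Your proposal is correct and follows essentially the same route as the paper: reduce to showing $C(A)\cap P=\{0,1\}$ via Theorem \ref{th:factorcondition}, prove $p\wedge\sb{A}p\sp{\perp}=0$ using centrality of $p$ to split $a=pa+p\sp{\perp}a$ and bound each piece by $0$, then invoke the antilattice hypothesis. The paper gets $pa\leq pp\sp{\perp}=0$ directly from the rule $ca\leq cb$ for positive $c$ commuting with $a$ and $b$, whereas you route the same computation through the order-preserving quadratic maps $x\mapsto pxp$ and $x\mapsto p\sp{\perp}xp\sp{\perp}$; since $pap=pa$ when $pCa$, this is only a cosmetic difference.
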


\begin{proof}Suppose that $A$ is an antilattice. It will be sufficient
to show that the only projections in $C(A)$ are $0$ and $1$ (Theorem
\ref{th:factorcondition}). Let $p\in C(A)\cap P$. We claim that $p\wedge
\sb{A}p\sp{\perp}$ exists and equals $0$. Obviously, $0\leq p,p\sp{\perp}$.
Suppose $a\in A$ and $a\leq p,p\sp{\perp}$. As $p\in C(A)$, it follows
that $a$ commutes with both $p$ and $p\sp{\perp}$, whence $pa\leq
pp\sp{\perp}=0$ and $p\sp{\perp}a\leq p\sp{\perp}p=0$, so $a=(p+p
\sp{\perp})a=pa+p\sp{\perp}a\leq 0$. Therefore, $p\wedge\sb{A}p\sp{\perp}=0$.
Consequently, if $A$ is an antilattice and $p\in C(A)\cap P$, then
either $p\leq p\sp{\perp}$ or $p\sp{\perp}\leq p$, i.e., either $p=0$
or $p=1$.
\end{proof}

\begin{lemma} \label{lm:Fac&Sym}
If $P$ is a complete OML, $A$ is a factor, and $0<p,q\in P$ with $p\perp q$,
then there exists a symmetry $t\in A$ such that $tpt\leq q$ or $tqt\leq p$.
Moreover, $t\not=\pm1$.
\end{lemma}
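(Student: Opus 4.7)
\smallskip

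\noindent\emph{Plan.} The strategy is the familiar ``build-a-symmetry-from-an-exchange'' device. The essential input is a comparability result for projections in a factor: since $A$ is a factor and $P$ is a complete OML, I would invoke (from the type theory for synaptic algebras developed in \cite{FPtype, FPsymSA}) the comparability of $p$ and $q$ under partial-symmetry exchange. More precisely, I expect to obtain a partial symmetry $t\sb{0}\in A$ that either exchanges $p$ with some $q\sp{\prime}\in P$, $q\sp{\prime}\leq q$, or exchanges $q$ with some $p\sp{\prime}\in P$, $p\sp{\prime}\leq p$. By symmetry of the roles of $p$ and $q$ in the statement of the lemma, I may assume the first alternative; then $t\sb{0}\sp{2}=p+q\sp{\prime}$ (a projection, since $p\perp q$ and $q\sp{\prime}\leq q$ force $pq\sp{\prime}=q\sp{\prime}p=0$), and $t\sb{0}pt\sb{0}=q\sp{\prime}$.

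Next I would build the desired full symmetry. Put $r:=p+q\sp{\prime}\in P$ and define
\[
t:=t\sb{0}+(1-r).
\]
Since $t\sb{0}\sp{2}=r\in P$, the element $t\sb{0}$ has spectrum in $\{-1,0,1\}$, so $t\sb{0}\sp{3}=t\sb{0}$, whence $t\sb{0}r=rt\sb{0}=t\sb{0}$ and therefore $t\sb{0}(1-r)=(1-r)t\sb{0}=0$. Combining this with $(1-r)\sp{2}=1-r$ yields $t\sp{2}=t\sb{0}\sp{2}+(1-r)\sp{2}=r+(1-r)=1$, so $t$ is a symmetry. A short computation, using $(1-r)p=p-p-q\sp{\prime}p=0$, gives $tpt=t\sb{0}pt\sb{0}=q\sp{\prime}\leq q$, as required.

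Finally, I would verify that $t\not=\pm 1$. In either case $tpt=p$, so the relation $tpt=q\sp{\prime}$ would force $p=q\sp{\prime}\leq q$; combined with $p\perp q$ this gives $p\leq q\wedge q\sp{\perp}=0$, contradicting $0<p$.

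\smallskip

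\noindent\emph{Main obstacle.} Once the comparability statement is in hand, everything else is a short calculation. The real work is thus in invoking (or verifying) the comparability theorem for projections in a factor with complete OML under the equivalence of being exchanged by a partial symmetry; if this is not quoted directly from the cited type-theoretic literature, one would need a Zorn's-lemma argument on maximal families of mutually orthogonal exchange pairs $\{p\sb{i}\leftrightarrow q\sb{i}\}$ with $p\sb{i}\leq p$ and $q\sb{i}\leq q$, use completeness of $P$ to take suprema $\widetilde p=\bigvee p\sb{i}\leq p$ and $\widetilde q=\bigvee q\sb{i}\leq q$ with $\widetilde p$ and $\widetilde q$ exchangeable, and then use the factor hypothesis (via Theorem~\ref{th:factorcondition}, the absence of nontrivial central projections) to rule out the possibility that $\widetilde p<p$ \emph{and} $\widetilde q<q$ simultaneously.
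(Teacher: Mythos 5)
Your proposal is correct and follows essentially the same route as the paper: the paper invokes generalized comparability in the form of \cite[Lemma 8.5]{FPsymSA} (a central projection $h$ and a single symmetry $t$ such that $ph$ is exchanged with a subprojection of $q$ and $q(1-h)$ with a subprojection of $p$), uses the factor hypothesis via Theorem \ref{th:factorcondition} to force $h\in\{0,1\}$, and rules out $t=\pm1$ exactly as you do. The only real difference is cosmetic: the cited lemma already delivers a full symmetry, so your (valid) extension of the partial symmetry $t\sb{0}$ to $t=t\sb{0}+(1-r)$ is not needed there.
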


\begin{proof}
Assume the hypotheses. According to \cite[Lemma 8.5]{FPsymSA}, There is a
central projection $h\in C(A)\cap P$ and a symmetry $t\in A$ such that
$ph$ and a subprojection of $q$ are exchanged by $t$ and $q(1-h)$ and a
subprojection of $p$ are exchanged by $t$. Since $A$ is a factor, the
central projection $h$ is either $0$ or $1$ (Theorem \ref{th:factorcondition}),
whence $tpt\leq q$ or $tqt\leq p$. If $t=\pm1$, then $tpt=p$ and $tqt=q$, so
$p\leq q$ or $q\leq p$, and since $p\perp q$, $p=0$ or $q=0$ contradicting $0<p,q$.
\end{proof}

\begin{lemma} \label{le:abpq}
Let $a,b\in A$ with $a\wedge\sb{A}b=0$ and suppose that $p,q\in P$,
$0<\lambda, \mu\in\reals$, $\lambda p\leq a$ and $\mu q\leq b$.
Then $p\wedge\sb{A}q=pq=qp=0$.
\end{lemma}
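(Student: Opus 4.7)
The plan is to show that $p\wedge\sb{A}q$ exists and equals $0$, and then appeal to Corollary \ref{co:KadTh1}(ii), which immediately gives $pq=qp=0$. Since $p,q\in P\subseteq A\sp{+}$, we have $0\leq p,q$, so $0$ is a lower bound of $\{p,q\}$ in $A$. Hence it suffices to prove that every $v\in A$ with $v\leq p$ and $v\leq q$ satisfies $v\leq 0$; this will establish that $0$ is the greatest lower bound.

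The key idea is a rescaling that brings $v$ simultaneously below both $a$ and $b$, so that the hypothesis $a\wedge\sb{A}b=0$ can be invoked. Set $\nu:=\min\{\lambda,\mu\}$, which is strictly positive. Since $\nu\geq0$ and $p-v\geq0$, scalar multiplication in the ordered vector space $A$ gives $\nu v\leq\nu p$. Because $\lambda-\nu\geq 0$ and $p\geq 0$, we also have $\nu p\leq\lambda p$, and by hypothesis $\lambda p\leq a$, so $\nu v\leq a$. The parallel chain, using $\nu\leq\mu$, $q\geq 0$, and $\mu q\leq b$, yields $\nu v\leq b$. Thus $\nu v$ is a lower bound of $\{a,b\}$ in $A$, so $\nu v\leq a\wedge\sb{A}b=0$; dividing by the positive scalar $\nu$ gives $v\leq 0$, as required.

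With $p\wedge\sb{A}q=0$ established, Corollary \ref{co:KadTh1}(ii) finishes the proof by delivering $pq=qp=0$.

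There is no real obstacle. The one point that calls for attention is the need to introduce the common scalar $\nu=\min\{\lambda,\mu\}$: the bounds $\lambda p\leq a$ and $\mu q\leq b$ come with (possibly distinct) constants, and one must rescale $v$ by a single positive factor compatible with \emph{both} inequalities before the hypothesis $a\wedge\sb{A}b=0$ can be applied. Everything else is just monotonicity of scalar multiplication in the positive cone.
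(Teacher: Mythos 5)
Your proof is correct and follows essentially the same route as the paper: introduce $\kappa=\min\{\lambda,\mu\}$, scale an arbitrary lower bound of $p$ and $q$ down below both $a$ and $b$, conclude $p\wedge\sb{A}q=0$ from $a\wedge\sb{A}b=0$, and then invoke Corollary \ref{co:KadTh1}(ii). No gaps.
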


\begin{proof}
Assume the hypotheses and let $\kappa:=\min\{\lambda, \mu\}$. Suppose
that $g\in A$ and $g\leq p,q$. Then $\kappa g\leq\kappa p\leq a$ and
$\kappa g\leq\kappa q\leq b$, whence $\kappa g\leq 0$, and therefore
$g\leq 0$. Thus $p\wedge\sb{A}q=0$, and by Corollary \ref{co:KadTh1}
(ii), $pq=qp=0$.
\end{proof}

\begin{theorem} \label{th:Projections}
Suppose that $A$ is not an antilattice. Then there are projections $p,q\in P$
with $p\perp q$, $0<p,q$, and $p\wedge\sb{A}q=pq=qp=0$.
\end{theorem}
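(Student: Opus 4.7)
The plan is to start from the assumption that $A$ is not an antilattice and extract two incomparable elements whose infimum exists, then translate that infimum to $0$ and reduce to the projection case via the earlier spectral-resolution corollary.

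First I would pick $a, b \in A$ that are incomparable and such that $c := a \wedge_{A} b$ exists in $A$; such a pair must exist by the negation of the antilattice property. Set $a' := a - c$ and $b' := b - c$. Since $c \leq a, b$, both $a'$ and $b'$ lie in $A^{+}$, and since $a$ and $b$ are incomparable neither coincides with $c$, so $0 < a'$ and $0 < b'$.

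Next I would show that $a' \wedge_{A} b' = 0$. Given any $k \in A$ with $k \leq a', b'$, one has $k + c \leq a, b$, hence $k + c \leq c$ by the definition of the infimum $c$, and therefore $k \leq 0$. Combined with $0 \leq a', b'$, this forces $a' \wedge_{A} b' = 0$.

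Now I would invoke Corollary \ref{co:subprojectionoid}: applied to $a' > 0$ it yields $\lambda > 0$ and $p \in P$ with $0 < p$ and $\lambda p \leq a'$; applied to $b' > 0$ it yields $\mu > 0$ and $q \in P$ with $0 < q$ and $\mu q \leq b'$. Lemma \ref{le:abpq}, with $a'$, $b'$, $p$, $q$ playing the roles of $a, b, p, q$, then gives $p \wedge_{A} q = pq = qp = 0$. Finally, from the preliminaries we have $pq = 0 \Leftrightarrow p \perp q$, so all the desired conclusions hold. There is no real obstacle here; the only moment of care is verifying that $a'$ and $b'$ are strictly positive (which uses incomparability of $a$ and $b$, not merely distinctness) and that the infimum $a' \wedge_{A} b' = 0$ genuinely holds in all of $A$ rather than only in a subspace, which is handled by the one-line argument above.
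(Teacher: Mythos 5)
Your proposal is correct and follows essentially the same argument as the paper's proof: subtract the infimum to produce two strictly positive elements with infimum $0$, then apply Corollary \ref{co:subprojectionoid} and Lemma \ref{le:abpq} to obtain the orthogonal projections. No gaps.
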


\begin{proof}
Since $A$ is not an antilattice, there exist $c,d\in A$ such that
$c\not\leq d$, $d\not\leq c$, and $c\wedge\sb{A}d$ exists in $A$. Put
$a:=c-c\wedge\sb{A}d$ and $b:=d-c\wedge\sb{A}d$. Obviously, $0\leq a,b$,
and since $c\not\leq d$ and $d\not\leq c$, we have $0<a,b$. Suppose $k\in A$
with $k\leq a,b$. Then $k+c\wedge\sb{A}d\leq c,d$, whence $k+c\wedge\sb{A}d
\leq c\wedge\sb{A}d$, and it follows that $k\leq 0$. Therefore, $a\wedge
\sb{A}b=0$.

By Corollary \ref{co:subprojectionoid}, there exist projections $0<p,q$
and real numbers $0<\lambda, \mu$ such that $\lambda p\leq a$ and
$\mu q\leq b$. Then by Lemma \ref{le:abpq}, $p\wedge\sb{A}q=pq=qp=0$,
and since $pq=0$, we have $p\perp q$.
\end{proof}

\begin{lemma} \label{le:Existsk}
Suppose that $p\in P$, $p\not=0,1$, and there exists a symmetry
$s\in A$ that exchanges $p$ and $p\sp{\perp}$. Then there exists
$k\in A$ with $k\leq p,p\sp{\perp}$ but $k\not\leq0$.
\end{lemma}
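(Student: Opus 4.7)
I would produce $k$ as an explicit polynomial in $s$ and the unit, namely $k = \frac{1}{\sqrt{2}}\,s - \frac{1}{2}$, and verify the two inequalities by expanding a single square.

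The first step is to extract the algebraic consequence of the exchange condition $sps = p^{\perp}$. Since $s^{2} = 1$, this rearranges to $sp = p^{\perp} s$, and symmetrically $ps = sp^{\perp}$; adding these gives
\[
ps + sp = (1-p)s + s(1-p) = 2s - (ps + sp),
\]
so $ps + sp = s$ (equivalently, $p \odot s = s/2$). The same relation holds verbatim with $p^{\perp}$ in place of $p$, since $s$ exchanges $p^{\perp}$ with $p$ as well.

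With this identity in hand, set $k := \frac{1}{\sqrt{2}}\,s - \frac{1}{2}$ and compute
\[
\left(p - \tfrac{1}{\sqrt{2}}\,s\right)^{2} = p^{2} - \tfrac{1}{\sqrt{2}}(ps+sp) + \tfrac{1}{2}s^{2} = p - \tfrac{1}{\sqrt{2}}\,s + \tfrac{1}{2}.
\]
Because squares in $A$ are nonnegative, this rearranges to $p - k \geq 0$, i.e., $p \geq k$. The analogous computation $(p^{\perp} - s/\sqrt{2})^{2} = p^{\perp} - s/\sqrt{2} + 1/2$ yields $p^{\perp} \geq k$.

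Finally, to rule out $k \leq 0$: this would say $\frac{1}{\sqrt{2}}\,s \leq \frac{1}{2}$. Note $s \neq -1$, since otherwise $p^{\perp} = sps = p$ and then $p = p \wedge p^{\perp} = 0$, contradicting $p \neq 0$; hence Lemma~\ref{le:SymIneq} applies and gives $\frac{1}{\sqrt{2}} \leq \frac{1}{2}$, a contradiction. The main obstacle I foresee is spotting the identity $ps + sp = s$ encoded in the exchange hypothesis; once that is seen, the square $(p - s/\sqrt{2})^{2}$ collapses to a linear expression in $s$, and both inequalities fall out at once.
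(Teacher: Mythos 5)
Your proof is correct and follows essentially the same strategy as the paper's: exhibit an explicit $k$ affine in $s$, verify $k\leq p$ and $k\leq p\sp{\perp}$ by writing $p-k$ (resp. $p\sp{\perp}-k$) as the square of an element of $A$, and rule out $k\leq 0$ via Lemma \ref{le:SymIneq}. Your choice $k=\tfrac{1}{\sqrt{2}}s-\tfrac12$ with the single square $\left(p-\tfrac{1}{\sqrt{2}}s\right)\sp{2}$ is a cleaner instance of the paper's computation, which uses $k=2s-\tfrac{25}{16}$ and the three-term square $(\alpha p+\gamma s+\beta p\sp{\perp})\sp{2}$.
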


\begin{proof}
By the hypotheses, we have $0<p,p\sp{\perp}\in P$ and there is a symmetry
$s\in A$ such that $sps=p\sp{\perp}$. Clearly, $s\not=\pm1$.
Put
\setcounter{equation}{0}
\begin{equation}
\alpha:=-\frac{5}{4},\ \beta :=-\frac{3}{4},\ \gamma:=1,\text{\ and\ }
k=2s-\alpha\sp{2}\in A.
\end{equation}
Note that $k=2s-\frac{25}{16}$, $sks=k$, and
\begin{equation} \label{eq:2}
\alpha\sp{2}-\beta\sp{2}=1,\ (\alpha+\beta)\gamma=-2, \text{\ and\ }
 \beta\sp{2}+\gamma\sp{2}=\frac{25}{16}.
\end{equation}
We have
\[
sp=p\sp{\perp}s,\ ps=sp\sp{\perp},\ sp+ps=sp+sp\sp{\perp}=s(p+p\sp{\perp})
=s1=s
\]
\begin{equation} \label{eq:3}
\text{and\ }sp\sp{\perp}+p\sp{\perp}s=ps+sp=s.
\end{equation}
Put $d:=\alpha p+\gamma s+\beta p\sp{\perp}$. Then by (\ref{eq:2}) and
(\ref{eq:3}),
\[
0\leq d\sp{2}=\alpha\sp{2}p+\alpha\gamma(ps+sp)+\gamma\sp{2}+\beta\gamma
 (sp\sp{\perp}+p\sp{\perp}s)+\beta\sp{2}p\sp{\perp}=
\]
\[
\alpha\sp{2}p+\beta\sp{2}(1-p)+\alpha\gamma s+\gamma\sp{2}+\beta\gamma s
=(\alpha\sp{2}-\beta\sp{2})p+(\alpha+\beta)\gamma s+\beta\sp{2}+\gamma\sp{2}
\]
\[
=p-2s+\frac{25}{16}=p-k,
\]
whence $0\leq p-k$, so $k\leq p$. Therefore, $0\leq s(p-k)s=sps-k=p
\sp{\perp}-k$, so $k\leq p\sp{\perp}$. But $2\not\leq \frac{25}{16}$,
whence by Lemma \ref{le:SymIneq} $2s\not\leq\frac{25}{16}$ and therefore
$k=2s-\frac{25}{16}\not\leq 0$.
\end{proof}

\begin{theorem} \label{th:p,qAntilattice}
Suppose that whenever $0<p,q\in P$ with $p\perp q$, there exists a
symmetry $t$ in $A$ such that $tpt\leq q$ or $tqt\leq p$. Then $A$
is an antilattice.
\end{theorem}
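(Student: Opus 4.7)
The plan is to argue by contraposition: assume $A$ is not an antilattice, and use the standing symmetry hypothesis together with Lemma~\ref{le:Existsk}, localized to a suitable corner algebra, to manufacture a contradiction.

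The starting point is Theorem~\ref{th:Projections}, which under the non-antilattice assumption produces nonzero orthogonal projections $p,q\in P$ with $p\wedge\sb{A}q=0$. The hypothesis then yields a symmetry $t\in A$ with, say, $tpt\leq q$ (the other case is symmetric). Set $p\sb{1}:=tpt$; since $t\sp{2}=1$, $p\sb{1}$ is a projection, it is nonzero (else $p=t\sp{2}pt\sp{2}=0$), and $p\sb{1}\leq q\leq p\sp{\perp}$. Moreover $tp\sb{1}t=p$, so $t$ exchanges $p$ and $p\sb{1}$.

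The key idea is now to localize to the sub-synaptic algebra $rAr$, where $r:=p+p\sb{1}\in P$ is the unit and $p\sb{1}=r-p$ is the orthocomplement of $p$ relative to $r$. The exchange relations for $t$ give $tp=p\sb{1}t$ and $tp\sb{1}=pt$, hence $tr=rt$; from this one checks that $u:=rtr=rt$ satisfies $u\sp{2}=r$ and $upu=p\sb{1}$, so $u$ is a symmetry of $rAr$ exchanging $p$ with its $rAr$-orthocomplement. Since $p\neq 0$ and $p\neq r$ (because $p\sb{1}\neq 0$), Lemma~\ref{le:Existsk} applied inside $rAr$ produces $k\in rAr$ with $k\leq p$, $k\leq p\sb{1}$, and $k\not\leq 0$.

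Because the positive cone of $rAr$ is precisely $rAr\cap A\sp{+}$, these inequalities transfer verbatim to $A$, so $k\leq p$ and $k\leq p\sb{1}\leq q$ in $A$ while $k\not\leq\sb{A}0$; this contradicts $p\wedge\sb{A}q=0$. The main technical step will be the algebraic verification that $u=rtr$ is genuinely a symmetry of $rAr$ exchanging $p$ and $p\sb{1}$; once the commutation relations for $t$ are in hand this is a short computation, after which applying Lemma~\ref{le:Existsk} inside $rAr$ and pulling its conclusion back to $A$ is routine.
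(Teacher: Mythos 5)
Your proposal is correct and follows essentially the same route as the paper: reduce via Theorem~\ref{th:Projections} to orthogonal projections $p,q$ with $p\wedge\sb{A}q=0$, pass to the corner synaptic algebra with unit $p+tpt$, and apply Lemma~\ref{le:Existsk} there. Your symmetry $u=rtr=rt$ is in fact the very same element as the paper's $s=tp+pt$ (since $tp=(tpt)t$), so even the key technical construction coincides.
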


\begin{proof}
Assume the hypothesis. Aiming for a contradiction, we assume that $A$ is not
an antilattice. By Lemma \ref{th:Projections}, there are projections
$0<p,q\in P$ with $p\perp q$ and $p\wedge\sb{A}q=pq=qp=0$. Thus, by hypothesis,
there exists a symmetry $t$ in $A$ such that $tpt\leq q$ or $tqt\leq p$. By
relabeling if necessary, we can and do assume that $tpt\leq q$. Thus,
$0<tpt\in P$ and $tpt\leq q\leq p\sp{\perp}$ so $tpt\perp p$. Therefore,
$p(tpt)=(tpt)p=0$ and $p\vee tpt=p+tpt$. If $a\in A$ and $a\leq p,tpt$, then
$a\leq p,q$, and it follows that $a\leq 0$; hence $p\wedge\sb{A}tpt=0$.

Now we are going to drop down to the synaptic algebra $A\sb{1}\subseteq
A$ defined by $A\sb{1}:=(p+tpt)A(p+tpt)$ in which $u:=p+tpt$ is the unit
element. The projection lattice of $A\sb{1}$ is the interval $P[0,u]=
\{q\in P:q\leq u\}$ in $P$ and we have $0<p,tpt\in P[0,u]$ with $p(tpt)=
(tpt)p=0$ and $p+tpt=p\vee tpt=u$, whence $tpt$ is the orthocomplement of
$p$ in $A\sb{1}$. Clearly, $p\wedge\sb{A\sb{1}}tpt=0$. Put $s:=tp+pt$. Then
$s$ is a partial symmetry in $A$ with $s\sp{2}=p+tpt$, and $s(p+tpt)=
(p+ptp)s=s$, so $s$ is a symmetry in $A\sb{1}$. Moreover, $sps=tpt$.
Applying Lemma \ref{le:Existsk} to the synaptic algebra $A\sb{1}$, we
find that there exists $k\in A\sb{1}$ with $k\leq p,tpt$ but $k\not\leq0$,
contradicting $p\wedge\sb{A\sb{1}}tpt=0$.
\end{proof}

\begin{theorem} \label{th:MainTh}
Suppose that the OML $P$ of projections in $A$ is complete. Then the following
conditions are mutually equivalent:
\begin{enumerate}
\item $A$ is an antilattice.
\item $A$ is a factor.
\item If $0<p,q\in P$ with $p\perp q$, then there exists a symmetry $t$ in $A$
such  that $tpt\leq q$ or $tqt\leq p$.
\end{enumerate}
\end{theorem}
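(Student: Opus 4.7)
My plan is to close the cycle (i)$\Rightarrow$(ii)$\Rightarrow$(iii)$\Rightarrow$(i). Each link in this chain has already been proved in the preceding sections, so Theorem \ref{th:MainTh} itself amounts to recognizing that the preparatory material lines up into a closed loop. Completeness of $P$ will enter exactly once, in the step (ii)$\Rightarrow$(iii).

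For (i)$\Rightarrow$(ii) I would simply invoke Theorem \ref{th:ALisFac} (whose proof shows that for any central projection $p$ one has $p\wedge\sb{A}p\sp{\perp}=0$, so the antilattice property collapses $p$ into $\{0,1\}$ and Theorem \ref{th:factorcondition} finishes the job). No completeness hypothesis is needed here.

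For (ii)$\Rightarrow$(iii) I would cite Lemma \ref{lm:Fac&Sym}. This is the unique place in the argument where completeness of $P$ is used: the lemma furnishes a central projection $h$ together with a symmetry $t$ that simultaneously exchanges $ph$ with a subprojection of $q$ and $q(1-h)$ with a subprojection of $p$. Since $A$ is a factor, Theorem \ref{th:factorcondition} forces $h\in\{0,1\}$, and this is precisely what selects one of the two inequalities $tpt\leq q$ or $tqt\leq p$.

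For (iii)$\Rightarrow$(i) I would cite Theorem \ref{th:p,qAntilattice}. Its argument is by contradiction: were $A$ not an antilattice, Theorem \ref{th:Projections} would supply orthogonal $0<p,q\in P$ with $p\wedge\sb{A}q=0$; applying (iii) yields a symmetry $t$ with, say, $tpt\leq q$; one then passes to the corner algebra $A\sb{1}:=(p+tpt)A(p+tpt)$, in which $p$ and $tpt$ are orthocomplementary and are exchanged by the symmetry $s:=tp+pt$, and Lemma \ref{le:Existsk} applied inside $A\sb{1}$ produces $k\in A\sb{1}$ with $k\leq p,tpt$ but $k\not\leq 0$, contradicting $p\wedge\sb{A\sb{1}}tpt=0$.

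The real obstacle does not live in Theorem \ref{th:MainTh} but in its supporting scaffolding, especially the explicit construction in Lemma \ref{le:Existsk} (with the surprising choice $k=2s-\frac{25}{16}$) and the factor-plus-completeness halving/exchange theorem underlying Lemma \ref{lm:Fac&Sym}. At the level of the main theorem itself, my only task is to verify that these three implications close into a cycle, which they do.
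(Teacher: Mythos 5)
Your proposal is correct and follows exactly the paper's own argument: the cycle (i)$\Rightarrow$(ii) via Theorem \ref{th:ALisFac}, (ii)$\Rightarrow$(iii) via Lemma \ref{lm:Fac&Sym}, and (iii)$\Rightarrow$(i) via Theorem \ref{th:p,qAntilattice}. Your added commentary on where completeness enters and where the real work lies is accurate but not needed for the proof itself.
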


\begin{proof}
Theorem \ref{th:ALisFac} shows that (i) $\Rightarrow$ (ii), Lemma
\ref{lm:Fac&Sym} shows that (ii) $\Rightarrow$ (iii), and Theorem
\ref{th:p,qAntilattice} shows that (iii) $\Rightarrow$ (i).
\end{proof}


\begin{thebibliography}{99}

\bibitem{Alf} Alfsen, E.M., \emph{Compact Convex Sets and Boundary
Integrals}, Springer-Verlag, New York, 1971, ISBN 0-387-05090-6.

\bibitem{Beran} Beran, L., {\em Orthomodular Lattices, An Algebraic
Approach}, Mathematics and its Applications, Vol. 18, D. Reidel Publishing
Company, Dordrecht, 1985.

\bibitem{FSyn} Foulis, David J., Synaptic algebras, \emph{Math. Slovaca}
{\bf 60}, no. 5 (2010) 631-–654.

\bibitem{SROUS} Foulis, D.J. and Pulmannov\'a, S., Spectral resolution in an
order unit space, \emph{Rep. Math. Phys.} {\bf 62} (2008) 323--344.

\bibitem{FPproj} Foulis, D.J. and Pulmannov\'a, S., Projections in a synaptic
algebra, \emph{Order} {\bf 27} (2010) 235--257.

\bibitem{FPtype} Foulis, D.J. and Pulmannov\'a, S., Type-decomposition of a synaptic
algebra, \emph{Found. Phys.} {\bf 43}, no 8 (2013) 948--968.

\bibitem{FPsymSA} Foulis, D.J. and Pulmannov\'a, S.,, Symmetries in synaptic algebras,
\emph{Math. Slovaca}, {\bf 64}, no. 3 (2014) 751--776.

\bibitem{FPcom} Foulis, D.J. and Pulmannov\'a, S., Commutativity in a synaptic algebra,
\emph{Math. Slovaca}, {\bf 66}, no. 2 (2016) 469--482.

\bibitem{FPBanach} Foulis, D.J. and Pulmannov\'a, S., Banach synaptic algebras,
submitted.	arXiv:1705.01011 [math.RA]

\bibitem{FJP2proj} Foulis, D.J., Jen\v cov\'a, A., and Pulmannov\'a, S., Two
projections in a synaptic algebra, \emph{Linear Algebra Appl.} {\bf 478} (2015)
163--287.

\bibitem{FJPpande} Foulis, D.J., Jen\v{c}ov\'a, A., and Pulmannov\'a, S., A
projection and an effect in a synaptic algebra, \emph{Linear Algebra Appl.}
{\bf 485} (2015) 417-–441.

\bibitem{vectlat} Foulis, D.J., Jen\v{c}ov\'a, A., and Pulmannov\'a, S., Vector
lattices in synaptic algebras, to appear in Math. Slovaca. arXiv:1605.06987 [math.RA].

\bibitem{FJPstat}  Foulis, D.J., Jen\v cov\'a, A., and  Pulmannov\'a,
S., States and synaptic algebras, \emph{Rep. Math. Phys.}{\bf 79}(2017) 13--32. arXiv:1605.06987[math-ph].

\bibitem{FJPMSR} Foulis, D.J., Jen\v cov\'a, A., and  Pulmannov\'a,
S., Every synaptic algebra has the monotone square root property, to
appear in \emph{Positivity}. arXiv:1605.04115 [math.OA]

\bibitem{FJPLS} Foulis, D.J., Jen\v cov\'a, A., and  Pulmannov\'a,S.,
A Loomis-Sikorski theorem and functional calculus for a generalized
Hermitian algebra, to appear in \emph{ Rep. Math. Phys.}.	arXiv:1610.06208 [math.RA]

\bibitem{GGJinf} Gheondea, Aurelian, Gudder, Stanley, and Jonas, Peter,
On the infimum of quantum effects. \emph{J. Math. Phys.} {\bf 46},
no. 6 (2005) 11 pp.

\bibitem{GPBB} Gudder, S., Pulmannov\'{a}, S., Bugajski, S., and
Beltrametti, E., Convex and linear effect algebras, \emph{Rep. Math.
Phys.} {\bf 44}, No. 3 (1999) 359--379.

\bibitem{Kad} Kadison, Richard V., Order properties of bounded self-adjoint
operators, \emph{Proc. Amer. Math. Soc.} {\bf 2} (1951) 505–-510.

\bibitem{Kalm} Kalmbach, G., {\em Orthomodular Lattices}, Academic
Press, Inc., London/New York, 1983.

\bibitem{McC} McCrimmon, K. \emph{A taste of Jordan algebras},
Universitext, Springer-Verlag, New York, 2004, ISBN: 0-387-95447-3.

\bibitem{Pid} Pulmannov\'a, S., A note on ideals in synaptic algebras,
\emph{Math. Slovaca} {\bf 62}, no. 6 (2012) 1091--1104.



\end{thebibliography}
\end{document}